\newtheorem{thm}{Theorem}[section]
\newtheorem{obs}{Remark}[section]
\numberwithin{equation}{section}
\begin{document}
\title[3D enstrophy cascade]
{Coherent vortex structures and 3D enstrophy cascade}
\author{R. Dascaliuc}
\address{Department of Mathematics\\
Oregon State University\\ Corvallis, OR 97332}
\author{Z. Gruji\'c}
\address{Department of Mathematics\\
University of Virginia\\ Charlottesville, VA 22904}
\date{\today}
\begin{abstract}
Existence of 2D enstrophy cascade in a suitable mathematical
setting, and under suitable conditions compatible with 2D turbulence
phenomenology, is known both in the Fourier and in the physical
scales. The goal of this paper is to show that the same
\emph{geometric} condition preventing the formation of singularities
-- $\frac{1}{2}$-H\"older coherence of the vorticity direction --
coupled with a suitable condition on a modified Kraichnan scale, and
under a certain modulation assumption on evolution of the vorticity,
leads to existence of 3D enstrophy cascade in physical scales of the
flow.
\end{abstract}

\maketitle

\section{Introduction}

The vorticity-velocity formulation of the 3D Navier-Stokes equations
(NSE) reads
\begin{equation}\label{vorticity}
\partial_t \omega + (u \cdot \nabla)\omega = (\omega \cdot \nabla)u
+
\triangle \omega
\end{equation}
where $u$ is the velocity and $\omega = \, \mbox{curl} \, u$ is the
vorticity of the fluid (the viscosity is set to 1). A comprehensive
introduction to the mathematical study of the vorticity can be found
in \cite{MB02}.

\medskip

In 2D, the vortex-stretching term, $(\omega \cdot \nabla)u$, is
identically zero, and the nonlinearity is simply a part of the
transport of the vorticity by the flow.

\medskip

This allowed the authors to adopt to 2D a general mathematical
setting for the study of turbulent cascades and locality in
\emph{physical scales} of 3D incompressible viscous and inviscid
flows introduced in \cite{DaGr11-1} and \cite{DaGr11-2},
respectively, to establish existence of the enstrophy cascade and
locality in 2D \cite{DaGr11-3}.

\medskip

Since the vortex-stretching term is not a flux-type term, the only
way to establish existence of the enstrophy cascade in 3D in this
setting is to show that its contribution to the ensemble averaging
process can be suitably interpolated between integral-scale averages
of the enstrophy and the enstrophy dissipation rate.

\medskip

This is where \emph{coherent vortex structures} come into play. A
role of the coherent vortex structures in turbulent flows was
recognized as early as the 1500's in Leonardo da Vinci's ``deluge''
drawings. On the other hand, Kolmogorov's K41 phenomenology
\cite{Ko41-1, Ko41-2} does not discern geometric structures; the K41
eddies are essentially amorphous. As stated by Frisch in his book
\emph{Turbulence}, The Legacy of A.N. Kolmogorov \cite{Fr95}, ``Half
a century after Kolmogorov's work on the statistical theory of fully
developed turbulence, we still wonder how his work can be reconciled
with Leonardo's half a millennium old drawings of eddy motion in the
study for the elimination of rapids in the river Arno.'' This remark
was followed by a discussion on dynamical role, as well as
statistical signature of vortex filaments in turbulent flows.
Several (by now classical) directions in the study of the vortex
dynamics of turbulence -- both in 2D and 3D -- are presented in
Chorin's book \emph{Vorticity and Turbulence} (cf. \cite{Ch94} and
the references therein). The approach exposed in \cite{Ch94} is
essentially discrete (probabilistic lattice models); on the other
hand, the first rigorous continuous statistical theory of vortex
filaments was given by P.-L. Lions and Majda in \cite{LM00}.

\medskip

Local anisotropic behavior of the enstrophy, i.e., self-organization
of the regions of high vorticity in coherent vortex structures --
most notably vortex filaments/tubes -- is ubiquitous. In particular,
local alignment or anti-alignment of the vorticity direction, i.e.
\emph{local coherence}, is prominently featured in turbulent flows.
A strong numerical evidence, as well as several theoretical
arguments explaining the physical mechanism behind the formation of
coherent structures -- including rigorous estimates on the flow
directly from the 3D NSE -- can be found, e.g., in \cite{Co90,
SJO91, CPS95, GGH97, GFD99, Oh09, GM11}. One way to look at the
phenomenon of local coherence of the vorticity direction is to
interpret it as a manifestation of the general observation that the
regions of high fluid intensity are -- in the vorticity formulation
-- \emph{locally} `quasi 2D-like' (in 2D, the vorticity direction is
\emph{globally} parallel or antiparallel).

\medskip

The rigorous study of \emph{geometric depletion of the nonlinearity}
in the 3D NSE was pioneered by Constantin when he derived a singular
integral representation for the stretching factor in the evolution
of the vorticity magnitude featuring a geometric kernel depleted by
coherence of the vorticity direction (cf. \cite{Co94}). This was
followed by the paper \cite{CoFe93} where Constantin and Fefferman
showed that as long as the vorticity direction in the regions of
intense vorticity is Lipschitz-coherent, no finite-time blow up can
occur, and later by the paper \cite{daVeigaBe02} where Beirao da
Veiga and Berselli scaled the coherence strength needed to deplete
the nonlinearity down to $\frac{1}{2}$-H\"older.

\medskip

A full \emph{spatiotemporal localization} of the vorticity
formulation of the 3D NSE was developed by one of the authors and
the collaborators in \cite{GrZh06, Gr09, GrGu10-1, GrGu10-2}. The
main obstacle to the full localization of the evolution of the
enstrophy was the spatial localization of the vortex-stretching
term, $(\omega \cdot \nabla) u$. An explicit local representation
for the vortex-stretching term was given in \cite{Gr09}, and the
leading order term reads
\begin{equation}\label{vstloc}
P.V. \int_{B(x_0,2R)} \epsilon_{jkl} \frac{\partial^2}{\partial x_i
\partial y_k} \frac{1}{|x-y|} \phi(y,t) \omega_l(y,t) \, dy \
\phi(x,t) \omega_i(x,t) \omega_j(x,t)
\end{equation}
where $\epsilon_{jkl}$ is the Levi-Civita symbol and $\phi$ is a
spatiotemporal cut-off associated with the ball $B(x_0,R)$. The key
feature of (\ref{vstloc}) is that it displays both \emph{analytic}
(via a local non-homogeneous Div-Curl Lemma \cite{GrGu10-2}) and
\emph{geometric} (via coherence of the vorticity direction
\cite{Gr09, GrGu10-1}) \emph{cancelations}, inducing analytic and
geometric \emph{local depletion of the nonlinearity} in the 3D
vorticity model. (For a different approach to localization of the
vorticity-velocity formulation see \cite{ChKaLe07}.)

\medskip

The present work is envisioned as a contribution to the effort of
understanding the role that the geometry of the flow and in
particular, coherent vortex structures, plays in the theory of
turbulent cascades. More precisely, we show -- utilizing the
aforementioned localization within the general mathematical
framework for the study of turbulent cascades in physical scales of
incompressible flows introduced in \cite{DaGr11-1} (in this case,
via suitable ensemble-averaging of the local enstrophy equality) --
that \emph{coherence of the vorticity direction}, coupled with a
suitable condition on a modified Kraichnan scale, and under a
certain modulation assumption on evolution of the vorticity, leads
to existence of 3D enstrophy cascade in physical scales of the flow.
This furnishes a mathematical evidence that, in contrast to 3D
energy cascade, 3D enstrophy cascade is locally \emph{anisotropic},
providing a form of a reconciliation between Leonardo's and
Kolmogorov's views on turbulence on the \emph{enstrophy level}.

\medskip

It is worth pointing out that our theory of turbulent cascades in
physical scales of 3D incompressible flows is on the \emph{energy
level} \cite{DaGr11-1, DaGr11-2, DaGr12-1} consistent both with the
K41 theory of turbulence and the Onsager's predictions on existence
of the \emph{inviscid} energy cascade (and consequently, with the
phenomena of \emph{anomalous dissipation} and \emph{dissipation
anomaly}), as well as with the previous rigorous mathematical work
on existence of the energy cascade in the wavenumbers \cite{FMRT01}.
In particular, on the energy level, it does not `see' geometric
structures. It is only here, i.e., on the enstrophy level, that the
role of coherent vortex structures is revealed. A distinctive
feature of our theory that makes incorporating the geometry of the
flow and in particular, local coherence possible is the fact that
the cascade takes place in the actual physical scales of the flow.
To the best of our knowledge, there is nothing in the K41 theory
that would contradict existence of the 3D enstrophy cascade; on the
other hand, given that K41 takes place primarily in the Fourier
space, i.e., in the wavenumbers, formulating conditions (within K41)
faithfully reflecting various geometric properties of the flow is a
much more challenging enterprise.

\medskip

The paper is organized as follows. Section 2 recalls the
ensemble-averaging process introduced in \cite{DaGr11-1}, and
Section 3 the spatiotemporal localization of the evolution of the
enstrophy developed in \cite{GrZh06, Gr09, GrGu10-1, GrGu10-2}.
Existence and locality of anisotropic 3D enstrophy cascade is
presented in Section 4.

\section{Ensemble averages}

In studying a PDE model, a natural way of actualizing a concept of
scale is to measure distributional derivatives of a quantity with
respect to the scale. Let $x_0$ be in $B(0,R_0)$ ($R_0$ being the
\emph{integral scale}, $B(0,2R_0)$ contained in $\Omega$ where
$\Omega$ is the global spatial domain) and $0< R \le R_0$.
Considering a locally integrable physical density of interest $f$ on
a ball of radius $2R$, $B(x_0, 2R)$, a \emph{local physical scale
$R$} -- associated to the point $x_0$ -- is realized via bounds on
distributional derivatives of $f$ where a test function $\psi$ is a
refined -- smooth, non-negative, equal to 1 on $B(x_0, R)$ and
featuring optimal bounds on the derivatives over the outer $R$-layer
-- cut-off function on $B(x_0, 2R)$. More explicitly,

\begin{equation}\label{ps}
|(D^\alpha f, \psi)| \le \int_{B(x_0, 2R)} |f| |D^\alpha \psi| \le
\Bigl(c(\alpha) \frac{1}{R^{|\alpha|}} |f| ,
\psi^{\delta(\alpha)}\Bigr)
\end{equation}
for some $c(\alpha)>0$ and $\delta(\alpha)$ in $(0,1)$. (This is
reminiscent of Bernstein inequalities in the Littlewood-Paley
decomposition of a tempered distribution.)

\medskip

Henceforth, we utilize {\em refined} spatiotemporal cut-off
functions $\phi=\phi_{x_0,R,T}=\psi\,\eta$, where $\eta=\eta_T(t)\in
C^\infty (0,T)$ and $\psi=\psi_{x_0,R}(x)\in\mathcal{D}(B(x_0,2R))$
satisfying
\begin{equation}\label{eta_def}
0\le\eta\le1,\quad\eta=0\ \mbox{on}\ (0,T/3),\quad\eta=1\ \mbox{on}\
(2T/3,T),\quad\frac{|\eta'|}{\eta^{\rho_1}}\le\frac{C_0}{T}\;
\end{equation}
and
\begin{equation}\label{psi_def}
0\le\psi\le 1,\quad\psi=1\ \mbox{on}\ B(x_0,R),
\quad\frac{|\nabla\psi|}{\psi^{\rho_2}}\le\frac{C_0}{R}, \quad
\frac{|\triangle\psi|}{\psi^{2\rho_2-1}}\le\frac{C_0}{R^2}\;,
\end{equation}
for some $\frac{1}{2} <\rho_1,\rho_2 < 1$.

In particular, $\phi_0=\psi_0 \eta$ where $\psi_0$ is the spatial
cut-off (as above) corresponding to $x_0=0$ and $R=R_0$.

For $x_0$ near the boundary of the integral domain, $S(0,R_0)$, we
assume additional conditions,
\begin{equation}\label{psi_bd}
0\le\psi\le\psi_0
\end{equation}
and, if  $B(x_0,R)\not\subset B(0,R_0)$, then
$\psi\in\mathcal{D}(B(0,2R_0))$ with $\psi=1\ \mbox{on}\ B(x_0,R)
\cap B(0,R_0)$ satisfying, in addition to (\ref{psi_def}), the
following:
\begin{equation}\label{psi_def_add1}
\begin{aligned}
&
\psi=\psi_0\ \mbox{on the part of the cone centered at zero and passing through}\\
& S(0,R_0)\cap B(x_0,R)\ \mbox{between}\  S(0,R_0)\
\mbox{and}\ S(0,2R_0)
\end{aligned}
\end{equation}
and
\begin{equation}\label{psi_def_add2}
\begin{aligned}
&
\psi=0\ \mbox{on}\ B(0,R_0)\setminus B(x_0,2R)\ \mbox{and outside the part of the cone}\\
 &
 \mbox{centered at zero and passing through}\ S(0,R_0)\cap B(x_0,2R)\\
 &
 \mbox{between}\  S(0,R_0)\ \mbox{and}\ S(0,2R_0).
\end{aligned}
\end{equation}

\medskip

A \emph{physical scale $R$} -- associated to the integral domain
$B(0,R_0)$ -- is realized via suitable ensemble-averaging of the
localized quantities with respect to `$(K_1,K_2)$-covers' at scale
$R$.

\medskip

Let $K_1$ and $K_2$ be two positive integers, and $0 < R \le R_0$. A
cover $\{B(x_i,R)\}_{i=1}^n$ of the integral domain $B(0,R_0)$ is a
\emph{$(K_1,K_2)$-cover at scale $R$} if
\[
 \biggl(\frac{R_0}{R}\biggr)^3 \le n \le K_1
 \biggr(\frac{R_0}{R}\biggr)^3,
\]
and any point $x$ in $B(0,R_0)$ is covered by at most $K_2$ balls
$B(x_i,2R)$. The parameters $K_1$ and $K_2$ represent the maximal
\emph{global} and \emph{local multiplicities}, respectively.

\medskip

For a physical density of interest $f$, consider time-averaged, per
unit mass -- spatially localized to the cover elements $B(x_i, R)$
-- local quantities $\hat{f}_{x_i,R}$,
\[
\hat{f}_{x_i,R} = \frac{1}{T} \int_0^T \frac{1}{R^3}
\int_{B(x_i,2R)} f(x,t) \phi^\delta_{x_i,R,T} (x,t) \, dx \, dt
\]
for some $0 < \delta \le 1$, and denote by $\langle F\rangle_R$ the
\emph{ensemble average} given by
\[
 \langle F\rangle_R = \frac{1}{n} \sum_{i=1}^n
 \hat{f}_{x_i,R}\,.
\]

\medskip

The key feature of the ensemble averages $\{\langle
F\rangle_R\}_{0<R\le R_0}$ is that $\langle F\rangle_R$ being
\emph{stable}, i.e., nearly-independent on a particular choice of
the cover (with the fixed parameters $K_1$ and $K_2$), indicates
there are no significant fluctuations of the sign of the density $f$
at scales comparable or greater than $R$. On the other hand, if $f$
does exhibit significant sign-fluctuations at scales comparable or
greater than $R$, suitable rearrangements of the cover elements up
to the maximal multiplicities -- emphasizing first the positive and
then the negative parts of the function -- will result in $\langle
F\rangle_R$ experiencing a wide range of values, from positive
through zero to negative, respectively.

\medskip

Consequently, for an \emph{a priori} sign-varying density, the
ensemble averaging process acts as a \emph{coarse detector of the
sign-fluctuations at scale $R$}. (The larger the maximal
multiplicities $K_1$ and $K_2$, the finer detection.)

\medskip

As expected, for a non-negative density $f$, all the averages are
comparable to each other throughout the full range of scales $R$, $0
< R \le R_0$; in particular, they are all comparable to the simple
average over the integral domain. More precisely,
\begin{equation}\label{k*}
 \frac{1}{K_*} F_0 \le \langle F \rangle_R \le K_* F_0
\end{equation}
for all $0 < R \le R_0$, where
\[
 F_0=\frac{1}{T}\int \frac{1}{R_0^3} \int  f(x,t)
 \phi_0^\delta (x,t) \, dx \, dt,
\]
and $K_* = K_*(K_1,K_2) > 1$.

\medskip

There are several properties of the ensemble averaging process that
although being plausible, deserve a precise analytic
description/quantification. Besides the above statement on
non-negative densities, perhaps the most elemental one is that if
the averages at a certain scale are nearly independent of a
particular choice of a $(K_1,K_2)$-cover ($K_1$ and $K_2$ fixed),
then essentially the same \emph{universality property} should
propagate to larger scales. In order to obtain a precise
quantitative propagation result, the universality is assumed on an
initial interval, rather than at a single scale (this is due to the
presence of smooth cut-offs with prescribed rates of change). Two
types of results are presently available.

\medskip

Let $f$ be a locally integrable function (a density), and $K_1$ and
$K_2$ two positive integers.

\medskip

\noindent TYPE I. \ Assume that there exists $R_* > 0$ such that for
any $R$ in $[R^*, 2R_*)$ and any $(27K_1,16K_2)$-cover at scale $R$,
the averages $\langle F\rangle_R$ are all comparable to some value
$F_*$; more precisely, $\displaystyle{\frac{1}{C_1} F_* \le \langle
F\rangle_R \le C_1 F_*}$. Then, for all $R \ge 2R_*$ and all
$(K_1,K_2)$-covers at scale $R$, the averages $\langle F\rangle_R$
satisfy $\displaystyle{\frac{1}{C_2} F_* \le \langle F\rangle_R \le
C_2 F_*}$.

\medskip

\noindent TYPE II. \ Assume that there exists $R_* > 0$ such that
for any $R$ in $(\frac{1}{2}R^*, \frac{5}{2}R_*)$ and any
$(K_1,K_2)$-cover at scale $R$, $\displaystyle{\frac{1}{C_1} F_* \le
\langle F\rangle_R \le C_1 F_*}$. Then, for all $R \ge \frac{5}{2}
R_*$ and all $(K_1,K_2)$-covers at scale $R$, the averages $\langle
F\rangle_R$ satisfy

\[
\frac{1}{C_3} \Bigl(\frac{R_*}{R}\Bigr)^{C_4} F_* \le \langle
F\rangle_R \le C_3 \Bigl(\frac{R}{R^*}\Bigr)^{C_4} F_*.
\]

\medskip

Shortly, in a Type I result, the universality is assumed with
respect to more refined covers, while in a Type II result, the
non-exactness of the propagation caused by the smooth cut-offs is
reflected in a correction to the universal value $F_*$ by the ratio
of the scales $R$ and $R^*$.

\medskip

The proofs are quite long and technical, and will be provided in a
separate publication together with computational results describing
the general statistics of the variation of the ensemble averages of
multi-scale sign-fluctuating densities across the scales. Here, we
provide a sample computation of the ensemble averages of a
time-independent (to emphasize the spatial behavior) 1D density
$f(x)=\cos^2 (x+5) \sin \Bigl(\frac{1}{2} (x-1)^2\Bigr)$ (Figure 1),
for $R_0=10$ and $K_1=K_2=3$; its global average is approximately
$-0.003880$. The $y$-values in Figure 2 represent the ensemble
averages with respect to the $(K_1,K_2)$-covers exhibiting maximal
positive and negative bias, across the range of scales from
$10^{-2}$ to $10^1=R_0$ (the $x$-values represent the powers of
$10$); the red line slightly below the $x$-axis corresponds to the
value of the global average. The response of the ensemble averages
to sign-fluctuations at several different scales is clearly visible.

\medskip

\begin{figure}
  \centerline{\includegraphics[scale=.7]{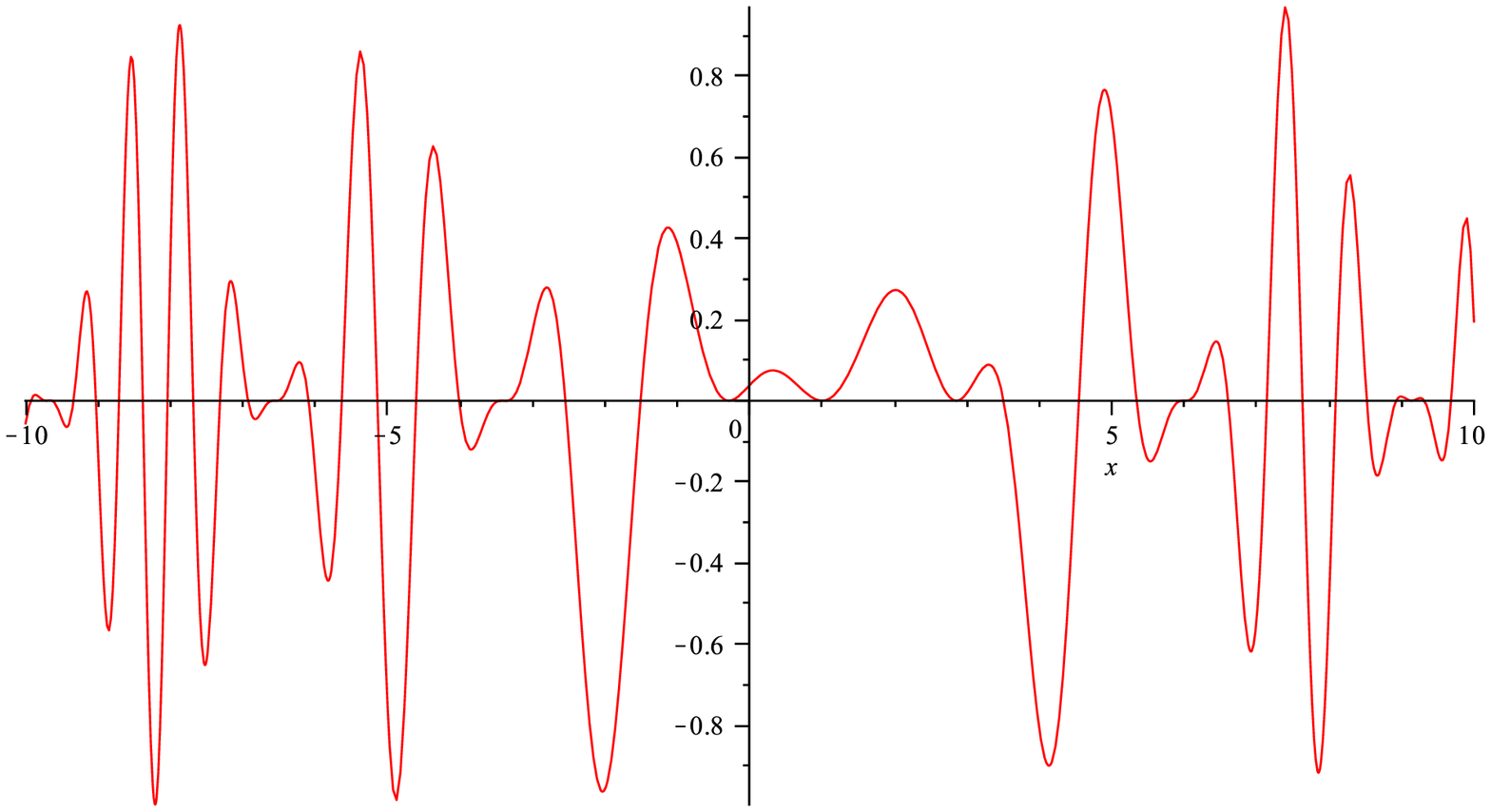}}
  \caption{$f(x)=\cos^2 (x+5) \sin \Bigl(\frac{1}{2} (x-1)^2\Bigr)$}
  \label{g_f}
\end{figure}

\medskip

\begin{figure}
  \centerline{\includegraphics[scale=.7]{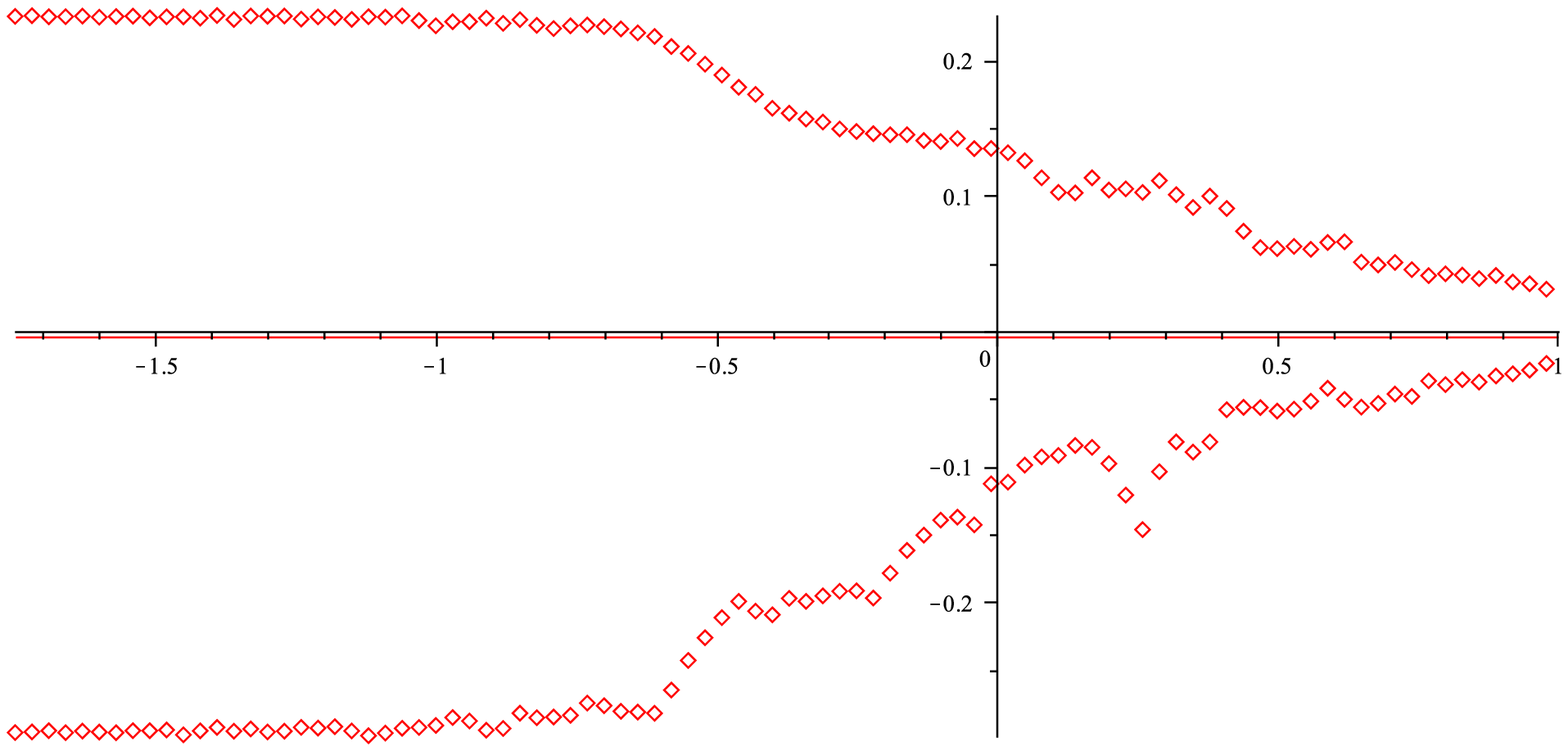}}
  \caption{Ensemble averages with positive and negative bias.}
  \label{av_pos}
\end{figure}

\section{Spatiotemporal localization of evolution of the enstrophy}

The localization of the vorticity formulation introduced in
\cite{GrZh06, Gr09} was performed on an arbitrarily small parabolic
cylinder below a point $(x_0,t_0)$ contained in the spatiotemporal
domain $\Omega \times (0,T)$, with an eye on formulating the
conditions preventing singularity formation at $(x_0,t_0)$. Here --
in order to coordinate the notation with Section 2 -- the
localization will be performed on a spatiotemporal cylinder
$B(x_0,2R) \times (0,T)$ where $x_0$ belongs to the integral domain
$B(0,R_0)$ and $0<R\le R_0$.

\medskip

Suppose that the solution is smooth in $B(x_0,2R) \times (0,T)$.
Multiplying the equations by $\phi \, \omega$ ($\phi=\psi\eta$ being
the cut-off introduced in Section 2) and integrating over $B(x_0,2R)
\times (0,t)$ for some $2T/3<t<T$ yields
\begin{align}\label{locx0}
 \int \frac{1}{2}|\omega(x,t)|^2\psi(x) \; dx &+ \int_0^t \int |\nabla\omega|^2\phi
 \;  dx \; ds\notag\\
 &= \int_0^t \int \frac{1}{2}|\omega|^2 (\phi_t+\triangle\phi) \; dx
 \; ds\notag\\
 &+ \int_0^t \int \frac{1}{2}|\omega|^2 (u \cdot \nabla\phi) \; dx
 \; ds +\int_0^t \int (\omega \cdot \nabla)u \cdot \phi \omega \; dx
 \; ds.
\end{align}

\medskip

Suppressing the time variable, the localized vortex-stretching term
can be written as (cf. \cite{Gr09})
\begin{align}\label{locvst}
  (\omega \cdot \nabla)u \cdot \phi \omega \, (x) & =
            \phi^{\frac{1}{2}}(x) \, \frac{\partial}{\partial x_i} u_j(x) \, \phi^{\frac{1}{2}}(x) \,
            \omega_i(x)\, \omega_j(x)\notag\\
  & = -c \,  P.V. \int_{B(x_0, 2r)} \epsilon_{jkl} \, \frac{\partial^2}{\partial x_i \partial y_k}
   \frac{1}{|x-y|} \, \phi^{\frac{1}{2}}  \, \omega_l \, dy \ \phi^{\frac{1}{2}}(x) \,
   \omega_i(x) \, \omega_j(x) + \ \mbox{LOT}\notag\\
  & = - c \, P.V. \int_{B_(x_0, 2r)} \bigl(\omega(x) \times
  \omega(y) \bigr) \cdot G_\omega (x,y) \, \phi^{\frac{1}{2}}(y) \, \phi^{\frac{1}{2}}(x) \, dy +
  \ \mbox{LOT}\notag\\
  & = \ \mbox{VST} \ + \ \mbox{LOT}
\end{align}
where $\epsilon_{jkl}$ is the Levi-Civita symbol,
\[
 \bigl( G_\omega (x,y) \bigr)_k = \frac{\partial^2}{\partial x_i
 \partial y_k} \frac{1}{|x-y|} \, \omega_i(x)
\]
and LOT denotes the lower order terms.

\medskip

The above representation formula for the leading order
vortex-stretching term VST features both analytic and geometric
cancelations.

\medskip

The geometric cancelations were utilized in \cite{Gr09} to obtain a
full localization of $\frac{1}{2}$-H\"older coherence of the
vorticity direction regularity criterion, and then in
\cite{GrGu10-1} to introduce a family of \emph{scaling-invariant}
regularity classes featuring a precise balance between coherence of
the vorticity direction and spatiotemporal integrability of the
vorticity magnitude. Denote by $\xi$ the vorticity direction, and
let $(x,t)$ be a spatiotemporal point, $r>0$ and $0 < \gamma < 1$. A
$\gamma$-H\"older measure of coherence of the vorticity direction at
$(x,t)$ is then given by

\[
 \rho_{\gamma, r}(x,t)=\sup_{y \in B(x, r), y \neq x}
 \frac{|\sin \varphi \bigl(\xi(x,t), \xi(y,t)\bigr)|}{|x-y|^\gamma}.
\]
The following regularity class --  a scaling-invariant improvement
of $\frac{1}{2}$-H\"older coherence -- is included,

\begin{equation}\label{hybrid0}
\int_{t_0-(2R)^2}^{t_0} \int_{B(x_0, 2R)} |\omega(x,t)|^2 \
\rho^2_{\frac{1}{2}, 2R}(x,t) dx  \, dt < \infty.
\end{equation}

\medskip

On the other hand, the analytic cancelations were utilized in
\cite{GrGu10-2} via a local non-homogeneous Div-Curl Lemma to obtain
a full spatiotemporal localization of Kozono-Taniuchi generalization
of Beale-Kato-Majda regularity criterion; namely, the
time-integrability of the $BMO$ norm of the vorticity.

\section{3D enstrophy cascade}

Let $\mathcal{R}$ be a region contained in the global spatial domain
$\Omega$. The inward enstrophy flux through the boundary of the
region is given by
\[
 - \int_{\partial\mathcal{R}} \frac{1}{2}|\omega|^2 (u\cdot n) \, d\sigma
 = - \int_\mathcal{R} (u\cdot\nabla)\omega \cdot \omega \, dx
\]
where $n$ denotes the outward normal (taking into account
incompressibility of the flow). Localization of the evolution of the
enstrophy to cylinder $B(x_0,2R) \times (0,T)$ (cf. Section 3) leads
to the following version of the enstrophy flux,
\begin{equation}\label{locflux}
\int \frac{1}{2}|\omega|^2 (u\cdot \nabla\phi) \, dx
 = - \int (u\cdot\nabla)\omega \cdot \phi \omega \, dx
\end{equation}
(again, taking into account the incompressibility; here,
$\phi=\phi_{x_0,R,T}$ defined in Section 2). Since $\nabla\phi =
(\nabla\psi) \eta$, and $\psi$ can be constructed such that
$\nabla\psi$ points inward -- toward $x_0$ -- (\ref{locflux})
represents \emph{local inward enstrophy flux, at scale $R$} (more
precisely, through the layer $S(x_0,R,2R)$) \emph{around the point
$x_0$}. In the case the point $x_0$ is close to the boundary of the
integral domain $B(0,R_0)$, $\nabla\psi$ is not exactly radial, but
still points inward.

\medskip

Consider a $(K_1,K_2)$ cover $\{B(x_i,R)\}_{i=1}^n$ at scale $R$,
for some $0<R\le R_0$. Local inward enstrophy fluxes, at scale $R$,
associated to the cover elements $B(x_i,R)$, are then given by
\begin{equation}\label{locfluxi}
\int \frac{1}{2}|\omega|^2 (u\cdot \nabla\phi_i) \, dx,
\end{equation}
for $1 \le i \le n$ ($\phi_i = \phi_{x_i,R,T}$). Assuming
smoothness, the identity (\ref{locx0}) written for $B(x_i,R)$ yields
the following expression for time-integrated local fluxes,

\begin{align}\label{loc}
 \int_0^t \int \frac{1}{2}|\omega|^2 (u \cdot \nabla\phi_i) \; dx
 \; ds
 &=
 \int \frac{1}{2}|\omega(x,t)|^2\psi_i(x) \; dx + \int_0^t \int
 |\nabla\omega|^2\phi_i
 \;  dx \; ds\notag\\
 &- \int_0^t \int \frac{1}{2}|\omega|^2 \bigl((\phi_i)_s+\triangle\phi_i\bigr) \; dx
 \; ds\notag\\
 &-\int_0^t \int (\omega \cdot \nabla)u \cdot \phi_i \, \omega \; dx
 \; ds,
\end{align}
for any $t$ in $(2T/3,T)$ and $1 \le i \le n$.

\medskip

Denoting the time-averaged local fluxes per unit mass associated to
the cover element $B(x_i,R)$ by $\hat{\Phi}_{x_i,R}$,
\begin{equation}\label{locfluxiav}
\hat{\Phi}_{x_i,R} = \frac{1}{t} \int_0^t \frac{1}{R^3} \int
\frac{1}{2}|\omega|^2 (u\cdot \nabla\phi_i) \, dx,
\end{equation}
the main quantity of interest is the ensemble average of
$\{\hat{\Phi}_{x_i,R}\}_{i=1}^n$ in the sense of Section 2; namely,
\begin{equation}\label{PhiR}
 \langle\Phi\rangle_R = \frac{1}{n}\sum_{i=1}^n \hat{\Phi}_{x_i,R}.
\end{equation}

\medskip

Since the flux density, $-(u \cdot \nabla)\omega \cdot \omega$, is
an \emph{a priori} sign-varying density, the stability, i.e., the
near-constancy of $\langle\Phi\rangle_R$ -- while the ensemble
averages are being run over all $(K_1,K_2)$ covers at scale $R$ --
will indicate there are no significant sign-fluctuations of the flux
density at the scales comparable or greater than $R$.

\medskip

The main goal of this section is to formulate a set of physically
reasonable conditions on the flow in $B(0,2R_0) \times (0,T)$
implying the positivity and near-constancy of $\langle\Phi\rangle_R$
across a suitable range of scales -- \emph{existence of the
enstrophy cascade}.

\medskip

We will consider the case $\Omega = \mathbb{R}^3$. The main reason
is that in the case of a domain with the boundary, it is necessary
to utilize the full spatial localization (joint in $x$ and $y$) of
the vortex-stretching term given by (\ref{locvst}); this introduces
a number of the lower order terms which -- in turn -- lead to terms
of the form
\[
 c_\gamma \frac{1}{R^\gamma} \iint |\omega|^2 \phi_i^{2\rho - 1} \,
 dx \, ds
\]
for some $\gamma > 2$ ($c_\gamma$ is a suitable dimensional constant
-- scaling like $R^{\gamma-2}$). Such terms introduce a correction
to the dissipation cut-off in the enstrophy cascade; namely, the
modified Kraichnan scale $\sigma_0$ (see (A2) below) would have to
be replaced by $\sigma_0^\frac{2}{\gamma}$ (times a dimensional
constant). On the other hand, in $\mathbb{R}^3$, the full (spatial)
localization of the vortex-stretching term can be replaced by the
spatial localization in $x$ only,
\begin{equation}\label{vsti}
  (\omega \cdot \nabla)u \cdot \phi_i \omega \, (x) =
   - c \, P.V. \int  \bigl(\omega(x) \times
  \omega(y) \bigr) \cdot G_\omega (x,y) \, \phi_i^{\frac{1}{2}}(x) \,
  \phi_i^{\frac{1}{2}}(x) \;
  dy ;
\end{equation}
the $y$ integral is then split in suitable small and large scales
(similarly to \cite{GrZh06}) without introducing correction terms
(cf. the proof of the main result in this section).

\bigskip

\noindent \textbf{(A1) \, Coherence Assumption}

\medskip

\noindent Denote the vorticity direction field by $\xi$, and let
$M>0$ (large). Assume that there exists a positive constant $C_1$
such that
\[
 |\sin\varphi\bigl(\xi(x,t),\xi(y,t)\bigr)| \le C_1
 |x-y|^\frac{1}{2}
\]
for any $(x,y,t)$ in $\bigl(B(0,2R_0) \times
B(0,2R_0+R_0^\frac{2}{3}) \times (0,T)\bigr) \cap \{|\nabla u| >
M\}$ ($\varphi(z_1,z_2)$ denotes the angle between the vectors $z_1$
and $z_2$). Shortly, $\frac{1}{2}$-H\"older coherence in the region
of intense fluid activity (large gradients).

\medskip

Note that the previous local regularity results \cite{GrZh06, Gr09}
imply that --  under (A1) -- the \emph{a priori} weak solution in
view is in fact smooth inside $B(0,2R_0) \times (0,T)$, and can,
moreover, be smoothly continued (locally-in-space) past $t=T$; in
particular, we can write (\ref{loc}) with $t=T$.

\medskip

Let us briefly remark that in the aforementioned works on
regularity, the region of intense fluid activity is usually defined
as $\{|\omega| > M\}$ rather that $\{|\nabla u| > M\}$. Cutting-off
at $|\omega| = M$ here would eventually lead to replacing $E_0$ in
the definition of the modified Kraichnan scale $\sigma_0$ (see the
next paragraph) by
\[
 E_0=\frac{1}{T}\int \frac{1}{R_0^3} \int \frac{1}{2}|\nabla u|^2
 \phi_0^{2\rho-1} \, dx \, dt,
\]
and we prefer keeping $\sigma_0$ solely in terms of $\omega$.

\bigskip

\noindent \textbf{(A2) \, Modified Kraichnan Scale}

\medskip

\noindent Denote by $E_0$ time-averaged enstrophy per unit mass
associated with the integral domain $B(0,2R_0) \times (0,T)$,
\[
 E_0=\frac{1}{T}\int \frac{1}{R_0^3} \int \frac{1}{2}|\omega|^2
 \phi_0^{2\rho-1} \, dx \, dt,
\]
by $P_0$ a modified time-averaged palinstrophy per unit mass,
\[
 P_0= \frac{1}{T}\int \frac{1}{R_0^3} \int |\nabla\omega|^2
 \phi_0 \, dx \, dt
 + \frac{1}{T}\frac{1}{R_0^3} \int \frac{1}{2}|\omega(x,T)|^2
 \psi_0(x) \, dx
\]
(the modification is due to the shape of the temporal cut-off
$\eta$), and by $\sigma_0$ a corresponding modified Kraichnan scale,
\[
 \sigma_0=\biggl(\frac{E_0}{P_0}\biggr)^\frac{1}{2}.
\]

\medskip

Then, the assumption (A2) is simply a requirement that the modified
Kraichnan scale associated with the integral domain $B(0,2R_0)
\times (0,T)$ be dominated by the integral scale,
\[
 \sigma_0 < \beta R_0,
\]
for a constant $\beta$, $0 < \beta < 1$, $\beta = \beta
(\rho,K_1,K_2,M,B_T)$, where $\displaystyle{B_T = \sup_{t \in (0,T)}
\|\omega(t)\|_{L^1}}$; this is finite provided the initial vorticity
is a finite Radon measure \cite{Co90}.

\bigskip

\noindent \textbf{(A3) \, Localization and Modulation}

\medskip

\noindent The general set up considered is one of the weak Leray solutions
satisfying (A1). As already mentioned, (A1) implies smoothness; however,
the control on regularity-type norms is only local. On the other hand,
the energy inequality on the global spatiotemporal domain
$\mathbb{R}^3 \times (0,T)$ implies
\[
 \int_0^T \int_{\mathbb{R}^3} |\omega|^2 \, dx \, dt < \infty;
\]
consequently, for a given constant $C_2 > 0$, there exists $R_0^* >
0$ ($R_0^* \le \min \{\sqrt{T},1\}$; this is mainly for convenience)
such that
\begin{equation}\label{a3.1}
 \int_0^T \int_{B(0,2R_0+R_0^\frac{2}{3})} |\omega|^2 \, dx \, dt
 \le \frac{1}{C_2}
\end{equation}
for any $0 < R_0 \le R_0^*$. This is the localization assumption on
$R_0$; the precise value of the constant $C_2$ is given in the proof
of the theorem -- right after the inequality (\ref{4}).

\medskip

The modulation assumption on the evolution of local enstrophy on
$(0,T)$ -- consistent with the choice of the temporal cut-off $\eta$
-- reads

\[
\int |\omega(x,T)|^2 \psi_0(x) \, dx \ge \frac{1}{2} \sup_{t \in
(0,T)} \, \int |\omega(x,t)|^2 \psi_0(x) \, dx.
\]

\medskip

\begin{obs}
\emph{Assumption (A1) is simply a quantification of the degree of
local coherence of the vorticity direction -- a manifestation of the
local `quasi 2D-like' behavior of turbulent flows -- needed to
sufficiently deplete the nonlinearity. Assumption (A2) is a slight
modification of the condition implying existence of the enstrophy
cascade in 2D (cf. \cite{DaGr11-2}); namely, a requirement that the
modified Kraichnan scale be dominated by the integral scale. Once
unraveled, it postulates that the region of interest exhibits large
vorticity gradients (relative to the vorticity magnitude, in the
spatiotemporal average) -- indicating high spatial complexity of the
flow -- and is the enstrophy analogue of the condition implying
existence of the energy cascade in 3D (\cite{DaGr11-1}), i.e., large
velocity gradients (relative to the velocity magnitude, in the
spatiotemporal average). The last assumption is somewhat technical;
however, its purpose within our theory is physical -- to prevent
uncontrolled temporal fluctuations of $R_0$-scale enstrophy. Such
fluctuations would inevitably prevent the cascade over $B(0,R_0)$
(the region of interest).}
\end{obs}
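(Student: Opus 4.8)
Since the final statement is an interpretive Remark rather than a formal assertion, the ``proof'' I would give consists of substantiating each of its three claims -- first by dimensional analysis, and then by tracing how the corresponding assumption feeds the flux estimate that underlies the cascade theorem. I would organize the justification into three parts, one per assumption.

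For the claim about (A1), I would start from the representation \eqref{vsti} of the localized vortex-stretching term as a principal-value integral of $(\omega(x)\times\omega(y))\cdot G_\omega(x,y)$ against the cut-off, and record that $|\omega(x)\times\omega(y)| = |\omega(x)|\,|\omega(y)|\,|\sin\varphi(\xi(x,t),\xi(y,t))|$ while the $(x-y)$-dependence of the kernel $G_\omega$ is homogeneous of degree $-3$ (two derivatives of $1/|x-y|$). Inserting the coherence bound $|\sin\varphi|\le C_1|x-y|^{1/2}$ from (A1) lowers the effective singularity to $|x-y|^{-5/2}$, which is integrable enough that, after splitting the $y$-integral at an intermediate scale as in \cite{GrZh06}, the near field is dominated by a palinstrophy-type average ($\int|\nabla\omega|^2$) and the far field by an enstrophy-type average ($\int|\omega|^2$). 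This is precisely the interpolation ``between integral-scale averages of the enstrophy and the enstrophy dissipation rate'' promised in the Introduction, and it shows $\frac{1}{2}$-H\"older coherence is exactly the strength needed to deplete the nonlinearity.

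For (A2), I would simply unravel $\sigma_0<\beta R_0$ as $E_0/P_0<\beta^2R_0^2$, i.e. $P_0>\beta^{-2}R_0^{-2}E_0$; since $P_0$ carries $|\nabla\omega|^2$ and $E_0$ carries $|\omega|^2$, this is the statement that the spatiotemporally averaged ratio of vorticity-gradient intensity to vorticity intensity exceeds $\beta^{-2}R_0^{-2}$ -- large vorticity gradients relative to the vorticity magnitude. To display the announced analogies I would compare with the energy-level theory \cite{DaGr11-1}, whose cascade condition is that the Taylor/Kraichnan scale $(\|u\|_{L^2}/\|\nabla u\|_{L^2})$ be dominated by $R_0$; the substitution $u\mapsto\omega$, $\nabla u\mapsto\nabla\omega$ produces (A2) verbatim, and the 2D enstrophy-cascade condition of \cite{DaGr11-2} is the same Kraichnan scale without the boundary term in $P_0$ -- explaining the word ``modified.''

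Finally, for (A3) I would trace the modulation inequality $\int|\omega(x,T)|^2\psi_0\,dx\ge\frac12\sup_t\int|\omega(x,t)|^2\psi_0\,dx$ into the flux estimate: the terminal enstrophy $\frac{1}{2R_0^3T}\int|\omega(x,T)|^2\psi_0\,dx$ is exactly the boundary term appearing in $P_0$, and the inequality guarantees this terminal value is comparable (up to a factor of $2$) to the temporal peak of the $R_0$-scale enstrophy. Without it the enstrophy could concentrate at an intermediate time and relax before $T$, so the time-averaged flux would underreport dissipation and the lower bound on $\langle\Phi\rangle_R$ would fail -- which is the ``uncontrolled temporal fluctuation'' that prevents the cascade. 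The hard part will be precisely this last point: (A3) is the one hypothesis not forced by the PDE, so the only honest justification is to verify in the flux computation that the lower bound genuinely uses $\int|\omega(x,T)|^2\psi_0$ and that replacing the supremum by the terminal value costs only the stated factor, thereby pinning the physical statement to the exact inequality it secures.
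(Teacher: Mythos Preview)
The statement is a Remark with no proof in the paper; it is offered as interpretive commentary on the three assumptions and is not followed by any argument. Your proposal therefore goes beyond what the paper provides: you supply an explicit substantiation by tracing each assumption into the mechanics of the proof of Theorem~\ref{cascade}. This is a reasonable thing to do, and your reading of how (A1)--(A3) function is accurate and consistent with the paper's actual use of them. In particular, your account of (A1) --- that the cross product produces $|\sin\varphi|$, that the kernel $G_\omega$ is homogeneous of degree $-3$, and that the $\frac{1}{2}$-H\"older bound reduces the singularity to $|x-y|^{-5/2}$ --- matches exactly the estimate (\ref{4}) in the proof of Theorem~\ref{cascade}, where the near-field integral is handled by Hardy--Littlewood--Sobolev and Gagliardo--Nirenberg to yield a palinstrophy-type bound, and the far-field (\ref{3}) gives the enstrophy-type bound. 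Your unraveling of (A2) is likewise correct and is precisely the dimensional reading the Remark invites.

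One small caution on (A3): in the proof of Theorem~\ref{cascade} the term that actually appears is $\sup_{t\in(0,T)}\|\psi_i^{1/2}\omega(\cdot,t)\|_2^2$ with the \emph{local} cut-off $\psi_i$, not $\psi_0$; the modulation hypothesis is stated for $\psi_0$. The passage from one to the other uses the structural bound $\psi_i\le\psi_0$ from (\ref{psi_bd}) together with the ensemble-averaging comparison (\ref{k*}), so the ``factor of $2$'' you cite is only part of the story --- the constant $K_*$ enters as well. This does not affect the physical interpretation you give, but if you intend your justification to pin the inequality to an exact place in the argument, you should note this intermediate step.
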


\medskip

\begin{thm}\label{cascade}
Let $u$ be a Leray solution on $\mathbb{R}^3 \times (0,T)$ with the
initial vorticity $\omega_0$ being a finite Radon measure. Suppose
that $u$ satisfies (A1)-(A3) on the spatiotemporal integral domain
$B(0, 2R_0 + R_0^\frac{2}{3}) \times (0,T)$. Then,
\[
 \frac{1}{4K_*} P_0 \le \langle\Phi\rangle_R \le 4K_* \ P_0
\]
for all $R, \, \frac{1}{\beta}\sigma_0 \le R \le R_0$ ($K_* > 1$ is
the constant in (\ref{k*})).
\end{thm}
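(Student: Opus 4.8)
The plan is to expand each time-averaged local flux $\hat\Phi_{x_i,R}$ via the localized enstrophy balance (\ref{loc}) taken at $t=T$, average over a $(K_1,K_2)$-cover, identify the main term as $P_0$, and bound everything else by a small multiple of $P_0$ using (A1)--(A3). Set $B'=B(0,2R_0+R_0^{2/3})$. By (A1) and the local regularity results of \cite{GrZh06,Gr09} the solution is smooth on $B'\times(0,T]$, so (\ref{loc}) is legitimate with $t=T$: $TR^3\hat\Phi_{x_i,R}$ equals the (non-negative) palinstrophy term $\int_0^T\!\!\int|\nabla\omega|^2\phi_i$ plus the (non-negative) endpoint enstrophy $\int\frac12|\omega(T)|^2\psi_i$, minus the cut-off term $\int_0^T\!\!\int\frac12|\omega|^2((\phi_i)_s+\triangle\phi_i)$, minus the vortex-stretching term $\int_0^T\!\!\int(\omega\cdot\nabla)u\cdot\phi_i\omega$. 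Averaging the first two terms and invoking the cover-multiplicity estimate behind (\ref{k*}) (applied to $|\nabla\omega|^2$ and to $|\omega(\cdot,T)|^2$ separately, which exactly reproduce the two pieces of $P_0$) gives $\frac{1}{K_*}P_0\le\langle\,\cdot\,\rangle_R\le K_*P_0$.

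For the cut-off term I would use $|\triangle\phi_i|\le\frac{C_0}{R^2}\psi_i^{2\rho-1}\eta$ and $|(\phi_i)_s|=\psi_i|\eta'|\le\frac{C_0}{T}\psi_i\eta^{\rho_1}$; after summing over the cover ($\sum_i\psi_i\lesssim K_2\psi_0$, which is precisely what the boundary construction (\ref{psi_bd})--(\ref{psi_def_add2}) secures) this is dominated by $C(K_*,C_0,K_2)(\frac{1}{R^2}+\frac{1}{T})E_0$. Now $R\ge\frac{1}{\beta}\sigma_0$ gives $\frac{1}{R^2}\le\frac{\beta^2}{\sigma_0^2}$, while $R_0\le R_0^*\le\sqrt T$ together with $\sigma_0<\beta R_0$ gives $\frac{1}{T}\le\frac{1}{R_0^2}<\frac{\beta^2}{\sigma_0^2}$; since $E_0=\sigma_0^2P_0$ by (A2), the cut-off contribution is $\le C\beta^2P_0$.

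The heart of the argument is the vortex-stretching average; here I would use that in $\mathbb{R}^3$ the representation (\ref{vsti}) is exact (no lower-order terms), split $\int(\omega\cdot\nabla)u\cdot\phi_i\omega$ over $\{|\nabla u|\le M\}$ and $\{|\nabla u|>M\}$, and on the latter split the $y$-integral of (\ref{vsti}) at $|x-y|=R_0^{2/3}$. On $\{|\nabla u|\le M\}$ the integrand is pointwise $\le M|\omega|^2\phi_i$, averaging to $\lesssim MK_2E_0\le MK_2\beta^2R_0^2P_0$. On $\{|\nabla u(x)|>M\}\cap\{|x-y|\ge R_0^{2/3}\}$ one has $|G_\omega(x,y)|\le CR_0^{-2}|\omega(x)|$ and $\int|\omega(y)|\,dy\le B_T$, so this averages to $\lesssim R_0^{-2}B_TK_2E_0=B_TK_2\beta^2P_0$. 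On the remaining near piece, $x\in\operatorname{supp}\phi_i\subset B(0,2R_0)$ and $|x-y|<R_0^{2/3}$ force $y\in B'$, so (A1) applies: the Levi-Civita contraction produces $|\sin\varphi(\xi(x),\xi(y))|\le C_1|x-y|^{1/2}$, turning the kernel $|x-y|^{-3}$ into the locally integrable $|x-y|^{-5/2}$. A Young/Hardy--Littlewood--Sobolev estimate in $y$ (which gains a positive power of $R_0$ from the cut at $R_0^{2/3}$) combined with a Gagliardo--Nirenberg interpolation and (\ref{psi_def}) bounds this near piece, after summing $\sum_i\phi_i\lesssim K_2\phi_0$, by
\[
\epsilon\,\frac{1}{TR_0^3}\int_0^T\|\phi_0^{1/2}\nabla\omega\|_{L^2}^2\,ds\;+\;C_\epsilon\,R_0^{\gamma}\,\|\phi_0^{1/2}\omega\|_{L^\infty_tL^2_x}^2\;\frac{1}{TR_0^3}\int_0^T\|\omega(s)\|_{L^2(B')}^{q}\,ds
\]
for some $\gamma,q>0$. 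The first term is $\le\epsilon P_0$; in the second, the modulation part of (A3) gives $\|\phi_0^{1/2}\omega\|_{L^\infty_tL^2_x}^2\lesssim TR_0^3P_0$, the a priori $L^1$ bound $B_T$ plus interpolation controls $\int_0^T\|\omega(s)\|_{L^2(B')}^{q}\,ds$, and the localization part of (A3), i.e.\ (\ref{a3.1}), makes the whole term $\le\frac{1}{C_2}\cdot(\text{structural constant})\cdot P_0$.

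Putting it together, $(\frac{1}{K_*}-\kappa)P_0\le\langle\Phi\rangle_R\le(K_*+\kappa)P_0$ with $\kappa\le C(\rho,K_1,K_2,M,B_T,C_0)(\beta^2+\epsilon+\frac{1}{C_2})$; fixing first $\epsilon$, then $\beta$ (the constant of (A2)), then $C_2$ (hence $R_0^*$ in (A3)) so that $\kappa\le\frac{3}{4K_*}$ yields the claim for all $\frac{1}{\beta}\sigma_0\le R\le R_0$. I expect the near piece of the vortex-stretching term to be the main obstacle: making the coherence-depleted principal-value integral rigorous, and getting the exponent bookkeeping (the cut-off powers $\rho,\rho_1,\rho_2$, the Gagliardo--Nirenberg exponents, and the net power of $R_0$ generated by the splitting radius $R_0^{2/3}$) to close with a \emph{positive} power of $R_0$ so that nothing diverges as $R_0\to0$, while checking that summing the $\phi_i$ against the cover multiplicity does not destroy the localization.
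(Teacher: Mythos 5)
Your proposal is correct and follows essentially the same route as the paper: the localized enstrophy balance (\ref{loc}) at $t=T$, the $|\nabla u|\le M$ versus $|\nabla u|>M$ split, the near/far decomposition of the $y$-integral at a $2/3$-power radius with the far piece controlled by $B_T$ and the near piece by the coherence-depleted kernel $|x-y|^{-5/2}$ via Hardy--Littlewood--Sobolev and Gagliardo--Nirenberg, absorbed using (\ref{a3.1}) and the modulation assumption, and finally the ensemble average via (\ref{k*}). The only cosmetic differences are the splitting radius ($R_0^{2/3}$ rather than the paper's $R^{2/3}$) and the fact that in the paper the near-piece smallness comes entirely from the localization constant $C_2$ in (\ref{a3.1}) (the HLS/GN step is scale-invariant and gains no power of $R_0$), not from the cut radius.
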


\begin{proof}

Throughout the proof, $(K_1,K_2)$ cover parameters $\rho, K_1$ and
$K_2$, as well as the gradient cut-off $M$ will be fixed.
Henceforth, the quantities depending only on $\rho,K_1,K_2,M,B_T$
will be considered constants and denoted by a generic $K$ (that may
change from line to line).

\medskip

Let $0 < R \le R_0$. As already noted, we can write the expression
for a time-integrated local flux corresponding to the cover element
$B(x_i,R)$, (\ref{loc}), for $t=T$,

\begin{align}\label{locT}
 \int_0^T \int \frac{1}{2}|\omega|^2 (u \cdot \nabla\phi_i) \; dx
 \; ds
 &=
 \int \frac{1}{2}|\omega(x,T)|^2\psi_i(x) \; dx + \int_0^T \int
 |\nabla\omega|^2\phi_i
 \;  dx \; ds\notag\\
 &- \int_0^T \int \frac{1}{2}|\omega|^2 \bigl((\phi_i)_s+\triangle\phi_i\bigr) \; dx
 \; ds\notag\\
 &-\int_0^T \int (\omega \cdot \nabla)u \cdot \phi_i \, \omega \; dx
 \; ds.
\end{align}

\medskip

The last two terms on the right-hand side need to be estimated.

\medskip

For the first term, the properties of the cut-off $\phi_i$ together
with the condition $T \ge R_0^2 \ge R^2$ yield

\begin{equation}\label{1}
\int_0^T \int \frac{1}{2}|\omega|^2
\bigl((\phi_i)_s+\triangle\phi_i\bigr) \; dx \; ds \le K
\frac{1}{R^2} \int_0^T \int |\omega|^2 \phi_i^{2\rho-1} \; dx \; ds.
\end{equation}

\medskip

For the second term, the vortex-stretching term
\[
\int_0^T \int (\omega \cdot \nabla)u \cdot \phi_i \, \omega \; dx
 \; ds,
\]
the integration is first split into the regions in which $|\nabla u|
\le M$ and $|\nabla u| > M$. In the first region, the integral is
simply dominated by
\begin{equation}\label{2}
K \frac{1}{R^2} \int_0^T \int |\omega|^2 \phi_i^{2\rho-1} \; dx \;
ds
\end{equation}
($R \le R_0 \le 1$). In the second region, and for a fixed $x$, we
divide the domain of integration in the representation formula
(\ref{vsti}),
\[
  (\omega \cdot \nabla)u \cdot \phi_i \omega \, (x) =
   - c \, P.V. \int  \bigl(\omega(x) \times
  \omega(y) \bigr) \cdot G_\omega (x,y) \, \phi_i^{\frac{1}{2}}(x) \,
  \phi_i^{\frac{1}{2}}(x) \,
  dy,
\]
into the regions outside and inside the sphere $\{y: \,
|x-y|=R^\frac{2}{3}\}$. In the first case, the integral is bounded
by
\begin{align}\label{3}
\iint_{\{|\nabla u|>M\}} & \int_{\{y: \, |x-y| > R^\frac{2}{3}\}}
\frac{1}{|x-y|^3} |\omega| \, dy \ \phi_i |\omega|^2 \, dx \,
ds\notag\\
& \le K \frac{1}{R^2} \, \sup_t \, \|\omega(t)\|_{L^1} \int_0^T \int
|\omega|^2 \phi_i^{2\rho-1} \; dx \; ds\notag\\
& \le K \frac{1}{R^2} \int_0^T \int |\omega|^2 \phi_i^{2\rho-1} \;
dx \; ds.
\end{align}
In the second case -- utilizing (A1), the Hardy-Littlewood-Sobolev
and the Gagliardo-Nirenberg interpolation inequalities, and the
localization part of (A3) -- the following string of bounds
transpires.
\begin{align}\label{4}
\iint_{\{|\nabla u|>M\}} & \int_{\{y: \, |x-y| < R^\frac{2}{3}\}}
\frac{1}{|x-y|^\frac{5}{2}} |\omega| \, dy \ \phi_i |\omega|^2 \, dx
\,
ds\notag\\
& \le K \int_0^T \|\omega\|_{L^2\bigl(B(x_0,
2R_0+R_0^\frac{2}{3})\bigr)}
\| \, |\phi_i^\frac{1}{2} \omega|^2 \, \|_\frac{3}{2} \, ds \notag\\
& \le K \int_0^T \|\omega\|_{L^2\bigl(B(x_0,
2R_0+R_0^\frac{2}{3})\bigr)} \|\phi_i^\frac{1}{2} \omega\|_2 \,
\|\nabla(\phi_i^\frac{1}{2}
\omega)\|_2 \, ds\notag\\
& \le K \ \biggl(\int_0^T \|\omega\|_{L^2\bigl(B(x_0,
2R_0+R_0^\frac{2}{3})\bigr)}^2 \, ds\biggr)^\frac{1}{2} \,
\biggl(\frac{1}{2} \sup_{t \in (0,T)} \|\psi_i^\frac{1}{2}
\omega\|_2^2 + \int_0^T \|\nabla(\phi_i^\frac{1}{2} \omega)\|_2^2 \,
ds\biggr)\notag\\
& \le \frac{1}{8K_*^2} \ \biggl(\frac{1}{2} \sup_{t \in (0,T)}
\|\psi_i^\frac{1}{2} \omega\|_2^2 + \int_0^T
\|\nabla(\phi_i^\frac{1}{2}
\omega)\|_2^2 \, ds\biggr),\notag\\
\end{align}
where $K_*$ is the constant in (\ref{k*}) (in the last line, we used
the localization assumption (\ref{a3.1}) with $C_2 = 64 K^2 K_*^4$).
Incorporating the estimate
\begin{align}\label{5}
\int |\nabla(\phi_i^\frac{1}{2} & \omega)|^2 \, dx\notag\\
& \le 2 \int |\nabla\omega|^2 \phi_i \, dx + c \int
\biggl(\frac{|\nabla\phi_i|}{\phi_i^\frac{1}{2}}\biggr)^2 \,
|\omega|^2 \, dx\notag\\
& \le 2 \int |\nabla\omega|^2 \phi_i \, dx + K \frac{1}{R^2}
\int |\omega|^2 \phi_i^{2\rho-1} \, dx\notag\\
\end{align} in
(\ref{4}), we arrive at the final bound,
\begin{align}\label{6}
\iint_{\{|\nabla u|>M\}} & \int_{\{y: \, |x-y| < R^\frac{2}{3}\}}
\frac{1}{|x-y|^\frac{5}{2}} |\omega| \, dy \ \phi_i |\omega|^2 \, dx
\, ds\notag\\
& \le \frac{1}{4K_*^2} \ \biggl(\frac{1}{2} \sup_{t \in (0,T)}
\|\psi_i^\frac{1}{2} \omega\|_2^2 + \int_0^T \| \nabla\omega \,
\phi_i^\frac{1}{2}\|_2^2 \, ds\biggr) + K \frac{1}{R^2} \int
|\omega|^2 \phi_i^{2\rho-1} \, dx.\notag\\\notag\\
\end{align}

Collecting the estimates (\ref{1})-(\ref{6}) and using the
modulation part of (A3), the relation (\ref{locT})
yields
\begin{align}\label{locTT}
 \int_0^T \int \frac{1}{2}|\omega|^2 (u \cdot \nabla\phi_i) \; dx
 \; ds
 &=
 \int \frac{1}{2}|\omega(x,T)|^2\psi_i(x) \; dx + \int_0^T \int
 |\nabla\omega|^2\phi_i
 \;  dx \; ds \, + \, Z\notag\\
 \end{align}
where
\[
 Z \le \frac{1}{2K_*^2} \ \biggl(\frac{1}{2}
\|\psi_i^\frac{1}{2}(\cdot) \omega(\cdot,T)\|_2^2 + \int_0^T \| \nabla\omega \,
\phi_i^\frac{1}{2}\|_2^2 \, ds\biggr) + K \frac{1}{R^2} \int
|\omega|^2 \phi_i^{2\rho-1} \, dx.
\]

Taking the ensemble averages and exploiting (\ref{k*}) multiple
times, we arrive at
\[
 \frac{1}{4K_*} P_0 \le \langle \Phi \rangle_R \le 4K_* P_0
\]
for all $\frac{1}{\beta} \sigma_0 \le R \le R_0$, and a suitable
$\beta=\beta(\rho,K_1,K_2,M,B_T)$.
\end{proof}

\begin{obs}
\emph{The first mathematical result on existence of 2D enstrophy
cascade is in the paper by Foias, Jolly, Manley and Rosa
\cite{FJMR02}; the general setting is the one of infinite-time
averages in the space-periodic case, and the cascade is in the
Fourier space, i.e., in the wavenumbers. A recent work
\cite{DaGr11-3} provides existence of 2D enstrophy cascade in the
physical space utilizing the general mathematical setting for the
study of turbulent cascade in physical scales introduced in
\cite{DaGr11-1}. To the best of our knowledge, the present paper is
the first rigorous result concerning existence of the enstrophy
cascade in 3D.}
\end{obs}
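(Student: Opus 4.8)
The plan is to begin from the localized enstrophy identity (\ref{loc}) on the generic cover element $B(x_i,R)$, written at the terminal time $t=T$. This is legitimate because (A1), combined with the local regularity theory of \cite{GrZh06, Gr09}, promotes the a priori Leray solution to one that is smooth on $B(0,2R_0)\times(0,T)$ and continues smoothly past $T$, so every integration by parts behind (\ref{loc}) is justified. In that identity the time-integrated local inward flux $\int_0^T\!\int \frac12|\omega|^2(u\cdot\nabla\phi_i)\,dx\,ds$ equals the sum of two manifestly non-negative ``good'' terms -- the terminal localized enstrophy $\int\frac12|\omega(x,T)|^2\psi_i\,dx$ and the localized palinstrophy dissipation $\int_0^T\!\int|\nabla\omega|^2\phi_i$ -- minus the cut-off commutator $\int_0^T\!\int\frac12|\omega|^2\bigl((\phi_i)_s+\triangle\phi_i\bigr)$ and minus the localized vortex-stretching term $\int_0^T\!\int(\omega\cdot\nabla)u\cdot\phi_i\,\omega$. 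The strategy is to show that, after ensemble-averaging, these last two (error) terms amount to a small fraction of the good terms plus a controlled multiple of $R^{-2}\iint|\omega|^2\phi_i^{2\rho-1}$, and then to absorb that multiple using the modified Kraichnan condition (A2).

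The commutator term is routine: the cut-off bounds (\ref{eta_def})--(\ref{psi_def}) and $T\ge R_0^2\ge R^2$ give $KR^{-2}\iint|\omega|^2\phi_i^{2\rho-1}$. The vortex-stretching term is the crux, and I would split the space-time domain into $\{|\nabla u|\le M\}$ and $\{|\nabla u|>M\}$. On the former, $|(\omega\cdot\nabla)u\cdot\phi_i\omega|\le M|\omega|^2\phi_i$ and, since $R\le R_0\le 1$, the contribution is again $\le KR^{-2}\iint|\omega|^2\phi_i^{2\rho-1}$. On the latter, I would use the $x$-localized representation (\ref{vsti}) -- available precisely because $\Omega=\mathbb{R}^3$, so there is no joint $x,y$ cut-off and hence no extra correction terms -- and split the $y$-integral at $|x-y|=R^{2/3}$. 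The far piece has an integrable tail: estimating the kernel by $|x-y|^{-3}\le R^{-2}$ and using $B_T=\sup_t\|\omega(t)\|_{L^1}<\infty$ (finite since $\omega_0$ is a finite Radon measure, cf. \cite{Co90}) yields once more $KR^{-2}\iint|\omega|^2\phi_i^{2\rho-1}$.

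The near piece is where coherence enters decisively. On $\{|\nabla u|>M\}$, assumption (A1) gives $|\omega(x)\times\omega(y)|\le C_1|x-y|^{1/2}|\omega(x)||\omega(y)|$, which depletes the borderline Biot--Savart kernel $|x-y|^{-3}$ down to the genuinely smoothing $|x-y|^{-5/2}$. Hardy--Littlewood--Sobolev followed by Gagliardo--Nirenberg interpolation then bounds this piece by $K\int_0^T\|\omega\|_{L^2(B(0,2R_0+R_0^{2/3}))}\|\phi_i^{1/2}\omega\|_2\|\nabla(\phi_i^{1/2}\omega)\|_2\,ds$, and Cauchy--Schwarz in time plus Young's inequality turn this into a small constant times $\bigl(\tfrac12\sup_t\|\psi_i^{1/2}\omega\|_2^2+\int_0^T\|\nabla(\phi_i^{1/2}\omega)\|_2^2\,ds\bigr)$, the smallness being bought from the localization part of (A3) by taking $R_0\le R_0^*$ with $C_2$ of order $K^2K_*^4$. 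Expanding $\|\nabla(\phi_i^{1/2}\omega)\|_2^2\le 2\|\phi_i^{1/2}\nabla\omega\|_2^2+KR^{-2}\int|\omega|^2\phi_i^{2\rho-1}$ puts everything in terms of the good quantities and the recurring $R^{-2}$ term. Collecting all contributions in the terminal-time version of (\ref{loc}) and using the modulation part of (A3) to replace $\sup_t\|\psi_i^{1/2}\omega\|_2^2$ by $2\|\psi_i^{1/2}\omega(\cdot,T)\|_2^2$, I obtain that the time-integrated local flux equals the two good terms plus an error $Z$ with $Z\le\tfrac{1}{2K_*^2}(\text{good terms})+KR^{-2}\int|\omega|^2\phi_i^{2\rho-1}$. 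Ensemble-averaging, invoking the non-negative-density comparability (\ref{k*}) several times (the averaged good terms are comparable to $P_0$, the averaged $R^{-2}$ term is $\lesssim KR^{-2}E_0=KR^{-2}\sigma_0^2P_0$), and choosing $\beta=\beta(\rho,K_1,K_2,M,B_T)$ small enough that $R\ge\tfrac1\beta\sigma_0$ forces the last contribution below $\tfrac12 P_0$ -- which (A2) makes possible -- yields $\tfrac{1}{4K_*}P_0\le\langle\Phi\rangle_R\le 4K_*P_0$.

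The main obstacle is keeping the constants honest in the near-field vortex-stretching estimate. The kernel $|x-y|^{-3}$ is exactly critical in three dimensions -- it behaves like the identity -- so nothing is gained unless the full $\tfrac12$-H\"older depletion of (A1) is exploited; and even then the resulting term is only borderline-subcritical, so absorbing it into the dissipation with a coefficient that is not merely $<1$ but quantitatively smaller than $K_*^{-2}$ forces one to purchase smallness from (A3), which is what pins $R_0\le R_0^*$. Verifying that the generic constant $K$, the threshold $C_2$, and the resulting $\beta$ depend only on $\rho,K_1,K_2,M,B_T$ -- and in particular are independent of $R$ and of the particular $(K_1,K_2)$-cover -- is the delicate bookkeeping on which the stability, hence the cascade itself, rests.
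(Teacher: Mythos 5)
The statement you were asked about is a bibliographical remark --- a historical claim about priority of results in \cite{FJMR02}, \cite{DaGr11-3}, and the present paper --- which admits no mathematical proof and is given none in the paper; what you have actually written is a proof of Theorem \ref{cascade}. As such, your argument follows the paper's own proof of that theorem essentially step for step: the terminal-time identity (\ref{locT}), the commutator bound (\ref{1}), the $\{|\nabla u|\le M\}$ versus $\{|\nabla u|>M\}$ splitting, the far/near decomposition of (\ref{vsti}) at $|x-y|=R^{2/3}$ with the $L^1$ vorticity bound on the far piece, the coherence-depleted Hardy--Littlewood--Sobolev/Gagliardo--Nirenberg estimate (\ref{4}) with smallness purchased from (\ref{a3.1}) via $C_2=64K^2K_*^4$, the gradient expansion (\ref{5}), the modulation step, and the final ensemble averaging with (\ref{k*}) and the choice of $\beta$ are all exactly the paper's route, so as a proof of Theorem \ref{cascade} it is correct and not a different approach.
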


\medskip

The second theorem concerns \emph{locality} of the flux. According
to turbulence phenomenology, the average flux at scale $R$ --
throughout the inertial range -- is supposed to be well-correlated
only with the average fluxes at nearby scales. In particular, the
locality along the \emph{dyadic scale} is expected to propagate
\emph{exponentially}.

\medskip

Denoting the time-averaged local fluxes associated to the cover
element $B(x_i,R)$ by $\hat{\Psi}_{x_i,R}$,
\begin{equation}\label{locfluxiavv}
\hat{\Psi}_{x_i,R} = \frac{1}{T} \int_0^T  \int
\frac{1}{2}|\omega|^2 (u\cdot \nabla\phi_i) \, dx,
\end{equation}
the (time and ensemble) averaged flux is given by
\begin{equation}\label{PsiR}
 \langle\Psi\rangle_R = \frac{1}{n}\sum_{i=1}^n \hat{\Psi}_{x_i,R} =
 R^3 \, \langle\Phi\rangle_R.
\end{equation}

\medskip

The following locality result is a simple consequence of the
universality of the cascade of the time and ensemble-averaged local
fluxes \emph{per unit mass} $\langle\Phi\rangle_R$ obtained in
Theorem 4.1.

\begin{thm}\label{locality}
Let $u$ be a Leray solution on $\mathbb{R}^3 \times (0,T)$ with the
initial vorticity $\omega_0$ being a finite Radon measure. Suppose
that $u$ satisfies (A1)-(A3) on the spatiotemporal integral domain
$B(0, 2R_0 + R_0^\frac{2}{3}) \times (0,T)$, and let $R$ and $r$ be
two scales within the inertial range delineated in Theorem 4.1. Then
\[
\frac{1}{16K_*^2} \biggl(\frac{r}{R}\biggr)^3 \le \frac{\langle \Psi
\rangle_r}{\langle \Psi \rangle_R} \le 16K_*^2
\biggl(\frac{r}{R}\biggr)^3.
\]
In particular, if $r=2^k R$ for some integer $k$, i.e., through the
dyadic scale,
\[
\frac{1}{16K_*^2} \ 2^{3k} \le \frac{\langle \Psi \rangle_{2^k
R}}{\langle \Psi \rangle_R} \le 16K_*^2 \ 2^{3k}.
\]
\end{thm}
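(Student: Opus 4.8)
The plan is to obtain Theorem \ref{locality} as an immediate corollary of Theorem \ref{cascade} together with the scaling identity $\langle\Psi\rangle_R = R^3\langle\Phi\rangle_R$ recorded in (\ref{PsiR}). First I would observe that for any two scales $R$ and $r$ lying in the inertial range $[\frac{1}{\beta}\sigma_0, R_0]$ delineated in Theorem \ref{cascade}, the identity (\ref{PsiR}) gives
\[
 \frac{\langle\Psi\rangle_r}{\langle\Psi\rangle_R} = \Bigl(\frac{r}{R}\Bigr)^3 \, \frac{\langle\Phi\rangle_r}{\langle\Phi\rangle_R},
\]
so the entire question is reduced to controlling the dimensionless ratio $\langle\Phi\rangle_r/\langle\Phi\rangle_R$ of the per-unit-mass fluxes.

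Second, I would invoke the two-sided universality bound of Theorem \ref{cascade}, namely $\frac{1}{4K_*}P_0 \le \langle\Phi\rangle_\varrho \le 4K_* P_0$, which holds simultaneously at $\varrho = r$ and at $\varrho = R$ since both scales are in the inertial range. Dividing the upper bound at scale $r$ by the lower bound at scale $R$ yields $\langle\Phi\rangle_r/\langle\Phi\rangle_R \le 16K_*^2$, and dividing the lower bound at $r$ by the upper bound at $R$ yields $\langle\Phi\rangle_r/\langle\Phi\rangle_R \ge \frac{1}{16K_*^2}$; here the division is legitimate because $P_0>0$ is built into the hypotheses (it is implicit already in assumption (A2), where $\sigma_0=(E_0/P_0)^{1/2}$, and is reinforced by the modulation part of (A3)). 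Substituting these bounds into the displayed identity gives precisely
\[
 \frac{1}{16K_*^2}\Bigl(\frac{r}{R}\Bigr)^3 \le \frac{\langle\Psi\rangle_r}{\langle\Psi\rangle_R} \le 16K_*^2 \Bigl(\frac{r}{R}\Bigr)^3 .
\]

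Finally, the dyadic statement follows by specializing to $r = 2^k R$ (with $2^k R$ still inside the inertial range), so that $(r/R)^3 = 2^{3k}$, yielding $\frac{1}{16K_*^2}\,2^{3k} \le \langle\Psi\rangle_{2^k R}/\langle\Psi\rangle_R \le 16K_*^2\,2^{3k}$, i.e.\ exponential locality along the dyadic scale. I do not expect a genuine obstacle in this argument: the substantive analysis — the geometric and analytic depletion estimates (\ref{1})–(\ref{6}) and the ensemble-averaging bookkeeping via (\ref{k*}) — has already been carried out inside the proof of Theorem \ref{cascade}, and what remains is the elementary observation that per-unit-mass universality of the flux across the inertial range upgrades automatically to $R^3$-weighted (hence dyadically exponential) locality of the raw flux $\langle\Psi\rangle_R$. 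The only point meriting a line of care is checking that $r$, $R$, and $2^k R$ all genuinely lie in $[\frac{1}{\beta}\sigma_0, R_0]$, so that Theorem \ref{cascade} may legitimately be applied at both endpoints.
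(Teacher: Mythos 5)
Your proposal is correct and is exactly the argument the paper intends: the paper offers no separate proof of Theorem \ref{locality}, stating only that it is a simple consequence of the universality bound $\frac{1}{4K_*}P_0 \le \langle\Phi\rangle_\varrho \le 4K_*P_0$ from Theorem \ref{cascade} combined with the identity $\langle\Psi\rangle_R = R^3\langle\Phi\rangle_R$ in (\ref{PsiR}). Your division of the two-sided bounds at the scales $r$ and $R$, together with the remark that $P_0>0$, reproduces the stated constants $16K_*^2$ precisely.
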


\begin{obs}
\emph{Previous locality results include locality of the flux via a
smooth filtering approach presented in \cite{E05} (see also
\cite{EA09}), and locality of the flux in the Littlewood-Paley
setting obtained in \cite{CCFS08}. The aforementioned results are
derived independently of existence of the inertial range, and are
essentially \emph{kinematic upper bounds} on the localized
flux in terms of a
suitable physical quantity localized to the nearby scales; the
corresponding lower bounds hold assuming saturation of certain
inequalities consistent with the turbulent behavior. In
contrast, our result is derived \emph{dynamically} as a direct
consequence of existence of the turbulent cascade in view, and
features \emph{comparable upper and lower bounds} throughout the
inertial range.}
\end{obs}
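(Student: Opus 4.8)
The plan is to obtain the locality bounds as a direct, essentially algebraic, consequence of the universality of the per-unit-mass flux $\langle\Phi\rangle_R$ proved in Theorem \ref{cascade}, combined with the elementary scaling identity $\langle\Psi\rangle_R = R^3\,\langle\Phi\rangle_R$ recorded in (\ref{PsiR}). All the genuine analytic labor — the splitting of the vortex-stretching term in (\ref{vsti}) into small and large scales, the Hardy--Littlewood--Sobolev and Gagliardo--Nirenberg interpolations, and the absorption arguments using (A1)--(A3) — has already been expended in the proof of Theorem \ref{cascade}; for the locality statement nothing beyond bookkeeping is required.

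First I would fix two scales $R$ and $r$ in the inertial window $[\tfrac{1}{\beta}\sigma_0,\,R_0]$ delineated in Theorem \ref{cascade}, and fix $(K_1,K_2)$-covers at scales $r$ and $R$ realizing $\langle\Psi\rangle_r$ and $\langle\Psi\rangle_R$. Using (\ref{PsiR}) for each scale,
\[
\frac{\langle\Psi\rangle_r}{\langle\Psi\rangle_R} \;=\; \frac{r^3}{R^3}\cdot\frac{\langle\Phi\rangle_r}{\langle\Phi\rangle_R}.
\]
Since both $r$ and $R$ lie in the inertial range, Theorem \ref{cascade} applies at each scale (and for every admissible cover), so $\tfrac{1}{4K_*}P_0 \le \langle\Phi\rangle_r \le 4K_* P_0$ and likewise for $\langle\Phi\rangle_R$; hence the quotient $\langle\Phi\rangle_r / \langle\Phi\rangle_R$ is pinched between $\tfrac{1}{16K_*^2}$ and $16K_*^2$, with $K_*$ the same constant as in (\ref{k*}). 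Substituting this two-sided bound into the displayed identity yields
\[
\frac{1}{16K_*^2}\Bigl(\frac{r}{R}\Bigr)^3 \;\le\; \frac{\langle\Psi\rangle_r}{\langle\Psi\rangle_R} \;\le\; 16K_*^2\Bigl(\frac{r}{R}\Bigr)^3,
\]
which is the first assertion. For the dyadic version I would simply specialize to $r = 2^k R$, so that $(r/R)^3 = 2^{3k}$, and read off the exponential-in-$k$ two-sided estimate.

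There is no real obstacle here beyond making sure the hypotheses line up: both scales must genuinely belong to the inertial interval produced by Theorem \ref{cascade} (otherwise the two-sided bound on $\langle\Phi\rangle$ is unavailable at one endpoint), the constant $K_*$ must be tracked consistently from (\ref{k*}) through Theorems \ref{cascade} and \ref{locality}, and the bound must be stated as holding for arbitrary admissible covers at each scale, which is exactly the form in which Theorem \ref{cascade} is proved. No new smallness condition, cut-off construction, or regularity input is needed. The conceptual content — that the flux \emph{per unit mass} is scale-independent across the inertial range, so that the \emph{total} flux through a scale-$R$ layer scales purely geometrically like $R^3$ — is entirely inherited from Theorem \ref{cascade}.
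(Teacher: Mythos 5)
Your argument is correct and is precisely the route the paper takes: the locality bounds follow by combining the identity $\langle\Psi\rangle_R = R^3\,\langle\Phi\rangle_R$ from (\ref{PsiR}) with the two-sided bound $\frac{1}{4K_*}P_0 \le \langle\Phi\rangle_R \le 4K_*\,P_0$ of Theorem \ref{cascade} applied at each of the two scales, giving the pinching of the quotient between $\frac{1}{16K_*^2}(r/R)^3$ and $16K_*^2(r/R)^3$. The paper does not even write this out, stating only that the result is a simple consequence of the universality of the per-unit-mass flux $\langle\Phi\rangle_R$ established in Theorem \ref{cascade}, which is exactly the bookkeeping you perform.
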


\bigskip

\bigskip

\noindent ACKNOWLEDGEMENTS \ The authors thank an anonymous referee
for a number of suggestions that lead to the present version of the
paper. Z.G. acknowledges the support of the \emph{Research Council of
Norway} via the grant number 213473 - FRINATEK.

\bigskip

\bigskip


\begin{thebibliography} {000000000000}


\bibitem[Ko41-1]{Ko41-1}
A.N. Kolmogorov, Dokl. Akad. Nauk SSSR \textbf{30}, 9 (1941).

\bibitem[Ko41-2]{Ko41-2}
A.N. Kolmogorov, Dokl. Akad. Nauk SSSR \textbf{31}, 538 (1941).

\bibitem[Co90]{Co90}
P. Constantin, Comm. Math. Phys \textbf{129}, 241 (1990).

\bibitem[SJO91]{SJO91}
Z.-S. She, E, Jackson and S. Orszag, Proc. R. Soc. Lond. A,
\textbf{434}, 101 (1991).

\bibitem[Co94]{Co94}
P. Constantin, SIAM Rev. \textbf{36}, 73 (1994).

\bibitem[CoFe93]{CoFe93}
P. Constantin and C. Fefferman, Indiana Univ. Math. J. \textbf{42},
775 (1993).

\bibitem[Ch94]{Ch94}
A. Chorin, Vorticity and Turbulence. Applied Mathematics Sciences
\textbf{103}, Springer-Verlag, 1994.

\bibitem[CPS95]{CPS95}
P. Constantin, I. Procaccia and D. Segel, Phys. Rev. E, \textbf{51},
3207 (1995).

\bibitem[Fr95]{Fr95}
U. Frisch, Turbulence. Cambridge University Press, 1995. The legacy
of A.N. Kolmogorov.

\bibitem[GGH97]{GGH97}
B. Galanti, J.D. Gibbon and M. Heritage, Nonlinearity \textbf{10},
1675 (1997).

\bibitem[GFD99]{GFD99}
J.D. Gibbon, A.S. Fokas and C.R. Doering, Phys. D \textbf{132}, 497
(1999).

\bibitem[LM00]{LM00}
P.-L. Lions and A. Majda, Comm. Pure Appl. Math. \textbf{53} 76
(2000).

\bibitem[FMRT01]{FMRT01}
C. Foias, O. Manley, R. Rosa and R. Temam, C.R. Acad. Sci. Paris
S\'er. I Math. \textbf{333}, 499 (2001).

\bibitem[MB02]{MB02}
A. Majda and A. Bertozzi, Vorticity and incompressible flow.
Cambridge University Press, 2002.

\bibitem[FJMR02]{FJMR02}
C. Foias, M. Jolly, O. Manley, and R. Rosa, J. Stat. Phys.  \textbf
{108} 591 (2002).

\bibitem[daVeigaBe02]{daVeigaBe02}
H. Beirao da Veiga and L.C. Berselli, Diff. Int. Eqs. \textbf{15},
345 (2002).

\bibitem[E05]{E05}
G. Eyink, Physica D \textbf{207}, 91 (2005).

\bibitem[GrZh06]{GrZh06}
Z. Gruji\'c and Qi Zhang, Comm. Math. Phys. \textbf{262}, 555
(2006).

\bibitem[CCFS08]{CCFS08}
A. Cheskidov, P. Constantin, S. Friedlander and R. Shvydkoy,
Nonlinearity \textbf{21}, 1233 (2008).

\bibitem[ChKaLe07]{ChKaLe07}
D. Chae, K. Kang and J. Lee,  Comm. PDE \textbf{32}, 1189 (2007).

\bibitem[Oh09]{Oh09}
K. Ohkitani, Geophys. Astrophys. Fluid Dyn, \textbf{103}, 113
(2009).

\bibitem[EA09]{EA09}
G. Eyink and H. Aluie, Phys. Fluids \textbf{21}, 115107 (2009).

\bibitem[Gr09]{Gr09}
Z. Gruji\'c, Comm. Math. Phys. \textbf{290}, 861 (2009).

\bibitem[GrGu10-1]{GrGu10-1}
Z. Gruji\'c and R. Guberovi\'c, Comm. Math. Phys. \textbf{298}, 407
(2010).

\bibitem[GrGu10-2]{GrGu10-2}
Z. Gruji\'c and R. Guberovi\'c, Ann. Inst. Henri Poincar\'e, Anal.
Non Lin\'eaire \textbf {27} 773 (2010).

\bibitem[GM11]{GM11}
T. Gallay and Y. Maekawa, Comm. Math. Phys. \textbf{302}, 477
(2011).

\bibitem[DaGr11-1]{DaGr11-1}
R. Dascaliuc and Z. Gruji\'c, Comm. Math. Phys. \textbf{305}, 199
(2011).

\bibitem[DaGr11-2]{DaGr11-2}
R. Dascaliuc and Z. Gruji\'c, Comm. Math. Phys. \textbf{309}, 757
(2012).

\bibitem[DaGr11-3]{DaGr11-3}
R. Dascaliuc and Z. Gruji\'c, (submitted) (2011). arXiv:1101.2209

\bibitem[DaGr12-1]{DaGr12-1}
R. Dascaliuc and Z. Gruji\'c, C. R. Math. Acad. Sci. Paris
\textbf{350}, 199 (2012).

\end{thebibliography}
\end{document}